\def\BibTeX{{\rm B\kern-.05em{\sc i\kern-.025em b}\kern-.08em
    T\kern-.1667em\lower.7ex\hbox{E}\kern-.125emX}}
\theoremstyle{definition}
\newtheorem{theorem}{Theorem}
\newtheorem{prop}{Proposition}
\begin{document}

\bstctlcite{IEEEexample:BSTcontrol}
\title{Online Convex Optimization for Coordinated Long-Term and Short-Term Isolated Microgrid Dispatch}
%

\author{\IEEEauthorblockN{Ning Qi, \textit{Member, IEEE}}, \IEEEauthorblockN{Yousuf Baker, \textit{Graduate Student Member, IEEE}}, \IEEEauthorblockN{Bolun Xu, \textit{Member, IEEE}}

\thanks{The work was partly supported by the Department of Energy, Office of Electricity, Advanced Grid Modeling Program under contract DE-AC02-05CH11231 and partly supported by the National Science Foundation under award ECCS-2239046. Yousuf Baker's work was supported by the U.A.E. Ministry of Education.

Ning Qi, Yousuf Baker, and Bolun Xu are with the Department of Earth and Environmental Engineering, Columbia University, New York, NY
10027 USA (e-mail: \{nq2176, ykb2105, bx2177\}@columbia.edu).}
}

\markboth{IEEE TRANSACTIONS ON Smart Grid,~Vol.~X, No.~X, XX July~2025}
{How to Use the IEEEtran \LaTeX \ Templates}

\maketitle


\begin{abstract}

This paper proposes a novel non-anticipatory coordinated dispatch framework for an isolated microgrid with hybrid short- and long-duration energy storage (LDES). We introduce a convex hull approximation model for nonconvex LDES electrochemical dynamics, facilitating computational tractability and accuracy. To address temporal coupling in SoC dynamics and long-term contracts, we generate hindsight-optimal state-of-charge (SoC) trajectories of LDES and netloads for offline training. In the online stage, we employ kernel regression to dynamically update the SoC reference and propose an adaptive online convex optimization (OCO) algorithm with SoC reference tracking and expert tracking to mitigate myopia and enable adaptive step-size optimization. We rigorously prove that both long-term and short-term policies achieve sublinear regret bounds over time, which improves with more regression scenarios, stronger tracking penalties, and finer convex approximations.  Simulation results show that the proposed method outperforms state-of-the-art methods, reducing costs by 73.4\%, eliminating load loss via reference tracking, and achieving an additional 2.4\% cost saving via the OCO algorithm. These benefits scale up with longer LDES durations, and the method demonstrates resilience to poor forecasts and unexpected system faults.  
\end{abstract}
\begin{IEEEkeywords}
Microgrid, long-duration energy storage, non-anticipatory, online convex optimization, long-term contract
\end{IEEEkeywords}

\section{Introduction}\label{Introduction}

Ensuring a reliable power supply for isolated microgrids in remote and underserved regions is challenging due to limited or nonexistent access to the main power grid~\cite{isolated}. These microgrids primarily rely on intermittent and stochastic renewable energy sources (RES), supported by a limited set of flexibility resources: diesel generators (DG), battery energy storage (BES), and long-duration energy storage (LDES) like pumped hydro and hydrogen~\cite{wang2025feasibility}. These resources offer complementary capabilities: DG and BES provide fast and reliable power responses but are constrained by limited energy capacity, while LDES systems can store large amounts of energy but are often limited in their short-term power output and relatively low conversion efficiency. As a result, microgrid operators must carefully coordinate this diverse mix of resources, accounting for both short-term (daily) and long-term (seasonal) variations in demand and renewable generations.


The key challenge in microgrid dispatch with hybrid short- and long-duration storage resources is the coordination of dispatch policies across multiple timescales under uncertainty. On one hand, long-term dispatch faces difficulties in accurately capturing seasonal uncertainty patterns and establishing effective boundaries or guidance for short-term dispatch~\cite{zhang2025long}, particularly given the absence of reliable long-term forecasts. On the other hand, short-term dispatch encounters challenges in capturing future opportunities and risks~\cite{baker2023transferable} without relying on future information, thus requiring a non-anticipatory approach that dynamically adapts to real-time uncertainties under the guidance of long-term policy. Furthermore, nonconvex characteristics in LDES models, such as electrochemical dynamics in electrolyzer~\cite{sanchez2018semi}, significantly increase computational complexity. Hence, it is crucial for the microgrid operator to develop a tractable dispatch framework to coordinate long-term and short-term operations, incorporating seasonal variability and immediate uncertainty realizations into decision making.

This paper proposes a novel non-anticipatory long-short-term coordinated dispatch framework for microgrids with hybrid short-long-duration storage resources. The proposed framework provides microgrid operators with a near-long-term optimal, prediction-free, resilient, and computationally tractable online dispatch tool. Our contributions are as follows:

\begin{enumerate}
\item \textit{Convex LDES Model:} We develop a convex hull approximation model for LDES to address the nonconvex electrochemical dynamics in electrolyzer and fuel cell, which is more accurate and scalable than piecewise linear models. The convex hull model avoids binary variables and preserves compatibility with gradient-based optimization.
\item \textit{Non-Anticipatory Dispatch:} We propose a long-short-term coordinated dispatch framework. We generate hindsight-optimal SoC trajectories and netloads for offline training. In the online stage, we employ kernel regression to dynamically update SoC reference and propose an adaptive online convex optimization (OCO) algorithm with reference tracking and expert tracking to mitigate myopia and enable adaptive step-size optimization.
\item \textit{Theoretical Guarantees:} We prove that both long-term and short-term policies achieve sublinear regret bounds, improving with more regression scenarios, stronger tracking penalties, and finer convex approximations.
\item \textit{Simulation Analysis:} 
We validate the proposed method on an isolated microgrid in Alaska. The proposed method outperforms state-of-the-art methods by reducing costs by 73.4\% and eliminating load loss through reference tracking, with an additional cost reduction of 2.4\% achieved via the OCO algorithm. These benefits scale with longer LDES durations, and the method demonstrates resilience to poor forecasts and unexpected system faults.
\end{enumerate}


We organize the remainder of this paper as follows. Section~\ref{LR} summarizes the previous works on microgrid dispatch and online optimization. Section~\ref{PR}~provides problem formulation and preliminaries for microgrid dispatch. Section~\ref{framework} proposes the non-anticipatory dispatch framework and theoretical analysis. Section~\ref{Case Study}~describes case studies to verify the theoretical results. 
Finally, section~\ref{Conclusion}~concludes this paper.

\section{Literature Review}\label{LR}

Existing studies primarily focus on short-term microgrid dispatch with hybrid short and long-duration storage resources. To address uncertainties in microgrid, early studies primarily rely on two-stage optimization methods, including robust optimization~\cite{hashemifar2022two,gu2023robust}, stochastic optimization~\cite{darivianakis2017data,dong2023forecast}, distributionally robust optimization~\cite{zhou2024distributionally}. These methods assume real-time decisions are made after all uncertainties are observed, while microgrid dispatch must respond to uncertainty information available up to the current time, without knowledge of future realizations. This motivates the exploration of online optimization methods for real-time dispatch by leveraging continuously updated uncertainty realizations. 

Microgrid operators require methods that are reliable, interpretable, fast-responding to real-time uncertainties, and resilient to extreme events. Many methods such as model predictive control (MPC)~\cite{guo2023long,abdelghany2024hierarchical}, reinforcement learning (RL)~\cite{hu2022soft,zhang2025long}, and stochastic dynamic programming (SDP)~\cite{schaffer2022stochastic,li2021multi} are promising, but they have certain limitations. MPC methods are limited by the poor forecast accuracy in microgrids, RL methods face challenges in transparency and robustness, SDP methods suffer from high computational complexity and limited scalability, making them less practical for real-time microgrid dispatch.

In addition to the aforementioned methods, there is growing interest in applying online control algorithms to power systems, such as Lyapunov optimization, online feedback optimization, and OCO, as they provide rapid responses to real-time uncertainties and theoretical guarantees. Lyapunov optimization~\cite{shi2015real,yu2022joint} adopts a ``1-lookahead" decision pattern, where decisions are made sequentially by observing current uncertainties and minimizing a drift-plus-penalty term derived from the Lyapunov function to balance immediate costs and queue stability. However, the ``observe-then-act'' pattern is unsuitable for practical microgrid dispatch, since decisions typically need to be made before uncertainties are realized. In contrast, OCO~\cite{kim2019predicting,lesage2019online} adopts a "0-lookahead" decision pattern, employing projection-based methods to proactively make decisions based on past information, and subsequently updating decisions based on observed regrets. Online feedback optimization~\cite{bernstein2019real,yan2023real} operates similarly to OCO by leveraging real-time measurement feedback and gradient-based methods, offering simplicity and rapid convergence, but it lacks theoretical regret guarantees compared to OCO. Although these methods have been applied in power systems, their application in LDES remains limited, primarily due to their myopic nature and inability to handle time-coupling constraints, such as SoC dynamics and long-term contracts in LDES~\cite{elizahydropower}. Directly applying these methods for long-term dispatch over monthly or even annual horizons can result in suboptimality and prolonged power shortages, as demonstrated in recent studies~\cite{qi2025long,guo2023long,zhang2025long}.

Recent studies have recognized the importance of long-term dispatch and its coordination with short-term dispatch. For instance, the annual reservoir strategy is generated using a Markov Decision Process and is integrated with the intraday hourly dispatch through deep reinforcement learning~\cite{zhang2025long}. The long-term boundaries for hydrogen storage SoC are established in~\cite{wakui2022shrinking} based on historical uncertainty scenarios. Hydrogen storage SoC references are generated and recursively updated online through kernel regression~\cite{guo2023long,qi2025long,kang2025securing}. Although these studies address seasonal uncertainty patterns and provide long-term dispatch policies to effectively guide short-term decisions, the long-term policies remain somewhat heuristic and lack rigorous theoretical performance guarantees. Moreover, most existing studies assume a linear model with constant efficiency~\cite{guo2023long}, adopt piecewise linear approximations~\cite{qi2025long}, or rely on learning-based methods~\cite{zhang2025long} to handle the inherent nonconvexity of LDES. However, these methods struggle to balance accuracy, tractability, and the preservation of convexity. To this end, the dispatch framework proposed in this work effectively coordinates long-term and short-term operations, while tractably addressing nonconvexity in LDES. It extends the OCO method to explicitly handle time-coupling constraints through long-term reference learning and tracking, providing rigorous theoretical guarantees for both long-term and short-term policies.

\section{Problem Formulation and Preliminaries}\label{PR}

\subsection{Oracle Multi-Period Economic Dispatch}

We consider an isolated microgrid that integrates DG, RES, BES, LDES and local load. Specifically, RES consists of wind and solar, while LDES comprises water electrolyzer, hydrogen storage, and fuel cell~\cite{qi2025long}. The \textbf{oracle multi-period economic dispatch (OED)} is formulated in~\eqref{eq:OED}.
\begin{subequations}\label{eq:OED}
\begin{align}
&\min \sum_{t\in\mathcal{T}}\big(c^{\text{l}}l_{t}+c^{\text{d}}d_{t}+c^{\text{b}}b_{t}^{+}+c^{\text{h}} p_{t}^{+}\big)\label{objective} \\
\text{s.t. }&\underline{D}\leq d_{t}\leq\overline{D}\text{, }\forall t\in \mathcal{T}\label{DGpower} \\
&0\le b_{t}^{-}\le \overline{B}\text{, } 0\le b_{t}^{+}\le \overline{B}\text{, }\forall t\in \mathcal{T}\label{power-bound-B}\\
&e_{t+1}=e_{t}+\eta^{-}b_{t}^{-}-b_{t}^{+}/\eta^{+}\text{, }\forall t\in \mathcal{T}\label{E-power-B}\\ 
&\underline{E}\le e_{t}\le \overline{E}\text{, }\forall t\in \mathcal{T}\label{E-bound-B}\\
&0\leq l_{t}\leq L_{t}\text{, }\forall t\in \mathcal{T}\label{Loadcurtailment}\\
&0\leq r_{t}\leq R_{t}\text{, }\forall t\in \mathcal{T}\label{REScurtailment}\\
&r_{t}+d_{t}+b_{t}^{+}-b_{t}^{-}+p_{t}^{+}-p_{t}^{-}+l_{t}= L_{t}\text{, }\forall t\in \mathcal{T}\label{powerbalance}\\
&\underline{P}\le p_{t}^{-}\le \overline{P}\text{, } 0\le p_{t}^{+}\le \overline{P}\text{, }\forall t\in \mathcal{T}\label{power-bound-H}\\
&\underline{H}\le h_{t}\le \overline{H}\text{, }\forall t\in \mathcal{T}\label{E-bound-H}\\
&h_{T}\geq {H}_{T}\text{, }\forall t\in \mathcal{T}\label{E-balance-H}\\
&h_{t}=h_{t-1}+\zeta^{-}(p_{t}^{-})p_{t}^{-}-p_{t}^{+}/\zeta^{+}(p_{t}^{+})\text{, }\forall t\in \mathcal{T}\label{E-power-H} 
\end{align}
\end{subequations}
where $\mathcal{T}$ denotes the set of time periods. Decision variables include the scheduled renewable generation $r_{t}$, load shedding power $l_{t}$, DG output $d_{t}$, BES discharge power $b_{t}^{+}$, BES charge power $b_{t}^{-}$, and BES SoC $e_{t}$, as well as LDES discharge power $p_{t}^{+}$, LDES charge power $p_{t}^{-}$, and LDES SoC $h_{t}$. Parameters are defined as follows: $c^{\text{l}}$, $c^{\text{d}}$, $c^{\text{b}}$, and $c^{\text{h}}$ denote the penalty cost of load shedding, fuel price of DG, marginal degradation cost of BES and LDES. $\underline{D}$ and $\overline{D}$ denote the lower and upper power bounds of DG. $\overline{B}$, $\underline{E}$ and $\overline{E}$ denote the upper power bound, lower and upper SoC bounds of BES. $\eta^{-}$ and $\eta^{+}$ denote charge and discharge efficiency of BES. $\underline{P}$, $\overline{P}$, $\underline{H}$ and $\overline{H}$ denote the lower and upper power bounds, lower and upper SoC bounds of LDES. $\zeta^{-}$ and $\zeta^{+}$ denote LDES charge and discharge efficiencies, represented as nonconvex functions of the charge and discharge power~\cite{qi2025long}. ${H}_{T}$ denotes the final SoC target required by the long-term contract. $L_{t}$ and $R_{t}$ denote the load power and available renewable power.

The objective function~\eqref{objective} aims to minimize the annual operational cost of the microgrid. Constraints~\eqref{DGpower} limit the power output of DG. Constraints~\eqref{power-bound-B} limit the charge and discharge power of BES. Constraints~\eqref{E-power-B} define the SoC dynamics with charge/discharge energy of BES. Constraints~\eqref{E-bound-B} limit the SoC of BES. Constraints~\eqref{Loadcurtailment}-\eqref{REScurtailment} limit the load shedding and scheduled renewable power. Constraints~\eqref{powerbalance} guarantee the power balance. Constraints~\eqref{power-bound-H} limit the charge and discharge power of LDES. Constraints~\eqref{E-power-H} define the SoC dynamics with charge/discharge energy of LDES. Constraints~\eqref{E-bound-H} limit the SoC of LDES. Constraint~\eqref{E-balance-H} ensures LDES SoC recycling and compliance with long-term contracts, while it is not necessary for BES. The complementary constraints that prevent simultaneous charge and discharge of BES and LDES are relaxed and removed from the model since the sufficient condition is guaranteed~\cite{li2015sufficient}. OED is computationally intractable and practically unattainable due to: (1) inherent nonconvexity in the LDES model, and (2) the non-anticipatory constraints~\eqref{Loadcurtailment}-\eqref{powerbalance}. Nevertheless, it provides a theoretical benchmark for subsequent analyses.

\subsection{Convex Reformulation}
We propose an inner convex hull approximation model in~\eqref{eq:reformulation} to address nonconvex electrochemical dynamics in electrolyzer and fuel cell. As illustrated in Fig.~\ref{fig:convex_hull}, we first select samples (marked by red crosses) from the nonconvex power-hydrogen curve, serving as vertices of the convex hull. The approximated decision (blue star) is then obtained via a convex combination of these vertices. We note that increasing the number of samples (i.e., finer convex approximations) reduces approximation error at the expense of higher computational complexity, as theoretically proven in Appendix~\ref{appendix2}. Traditional piecewise linear approximations~\cite{qi2025long,raheli2023conic} can also address this nonconvexity by introducing binary variables to enforce segment selection, thereby maintaining the nonconvexity. In contrast, our convex hull approximation ensures convexity and significantly improves tractability.
\begin{subequations}\label{eq:reformulation}
\begin{align}
&\min \sum_{t\in\mathcal{T}}\big(c^{\text{l}}l_{t}+c^{\text{d}}d_{t}+c^{\text{b}}b_{t}^{+}+c^{\text{h}} \sum_{m\in\mathcal{M}}\lambda_{m\text{,}t}^{+}P_{m}^{+}\big)\label{objective1} \\
\hspace{-0.2cm}\text{s.t. }&\forall t\in \mathcal{T}\text{, }\eqref{DGpower}-\eqref{REScurtailment}\text{, }\eqref{E-bound-H}-\eqref{E-balance-H}\notag\\
&r_{t}+d_{t}+b_{t}^{+}-b_{t}^{-}+\sum_{m\in\mathcal{M}}(\lambda_{m\text{,}t}^{+}P_{m}^{+}-\lambda_{m\text{,}t}^{-}P_{m}^{-})+l_{t}= L_{t}\label{powerbalance-1}\\
&h_{t}=h_{t-1}+\sum\nolimits_{m\in\mathcal{M}}(\lambda_{m\text{,}t}^{-}H_{m}^{-}-\lambda_{m\text{,}t}^{+}H_{m}^{+})\label{E-power-H-1}\\
& \lambda_{m\text{,}t}^{-}\text{,}\lambda_{m\text{,}t}^{+}\geq 0\text{, }\sum\nolimits_{m\in\mathcal{M}}\lambda_{m\text{,}t}^{-} = 1\text{, }\sum\nolimits_{m\in\mathcal{M}}\lambda_{m\text{,}t}^{+} = 1\label{piecewise}
\end{align}
\end{subequations}
where $\mathcal{M}$ denote the set of discretized samples. $P_{m}^{-}$, $H_{m}^{-}$ and $P_{m}^{+}$, $H_{m}^{+}$ represent sampled LDES charge/discharge power and hydrogen production/consumption. $\lambda_{m,t}^{-}$, $\lambda_{m,t}^{+}$ denote combination decision variables for LDES charge/discharge power.
\begin{figure}[t!] 
    \setlength{\abovecaptionskip}{-0.1cm}  
    \setlength{\belowcaptionskip}{-0.1cm}   
\centerline{\includegraphics[width=0.9\columnwidth]{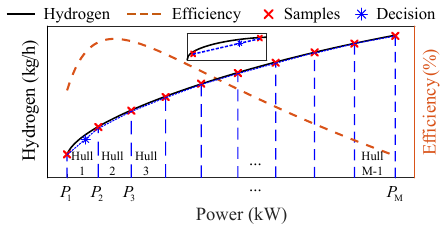}}
    \caption{Illustration of Convex Hull Approximation for the LDES Model.}    \label{fig:convex_hull}
\end{figure}


\subsection{Single-Period Economic Dispatch}

Real-time dispatch of microgrid considers either a single time period or a short look-ahead window. We consider the \textbf{single-period economic dispatch (SED)} as formulated in~\eqref{eq:SED}. To mitigate the inherent myopia of SED, one promising approach is to incorporate the future opportunity cost of BES and LDES into the objective function using SDP~\cite{qi2025locational}. However, learning the opportunity value functions of BES and LDES is intractable due to their strong coupling and the exponential state complexity. To address this challenge, we propose an alternative method that directly learns the SoC trajectories of BES and LDES. In Proposition 1, we show that determining SoC trajectories is equivalent to addressing the time-coupling constraints, thus effectively mitigating the myopia of SED.
\begin{align}\label{eq:SED}
&\min\big(c^{\text{l}}l_{t}+c^{\text{d}}d_{t}+c^{\text{b}}b_{t}^{+}+c^{\text{h}} \sum\nolimits_{m\in\mathcal{M}}\lambda_{m\text{,}t}^{+}P_{m}^{+}\big) \\
\text{s.t. }&\eqref{DGpower}-\eqref{REScurtailment}\text{, }\eqref{E-bound-H}\text{, }\eqref{powerbalance-1}-\eqref{piecewise}\notag
\end{align}

\begin{prop}\label{prop:separation}\textbf{Solution Equivalence.}
Let \( \{x_t^\star\text{,} y_t^\star\}_{t=1}^T \) be the optimal solution to problem~\eqref{eq:reformulation}, where $x_t=\{r_{t}\text{,}l_{t}\text{,}d_{t}\}$ are subject to single-period constraints, $y_t=\{b_{t}^{+}\text{,}b_{t}^{-}\text{,}e_{t}\text{,}\lambda_{m\text{,}t}^{+}\text{,}\lambda_{m\text{,}t}^{-}\text{,}h_{t}\}$ are subject to inter-temporal constraints. If SED with $y_t = y_t^\star$ and is strictly convex in $x_t$ for each $t$, then solving SED sequentially yields the same optimal solution $x_t^\star$ to problem~\eqref{eq:reformulation}.
\end{prop}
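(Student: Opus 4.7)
The plan is to exploit the block-separable structure of problem~\eqref{eq:reformulation}. First, I would partition its constraints into two groups: (i) \emph{intra-period} constraints, namely~\eqref{DGpower}, \eqref{power-bound-B}, \eqref{E-bound-B}, \eqref{Loadcurtailment}--\eqref{REScurtailment}, \eqref{E-bound-H}, \eqref{powerbalance-1} and~\eqref{piecewise}, each of which involves only the pair $(x_t,y_t)$ at a single index $t$; and (ii) \emph{inter-period} constraints, namely the SoC dynamics~\eqref{E-power-B} and~\eqref{E-power-H-1} together with the terminal condition~\eqref{E-balance-H}, which couple indices but depend only on $y$. The objective~\eqref{objective1} is additive across $t$ and, crucially, each per-period term is a function of $(x_t,y_t)$ alone.

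Given the optimal solution $\{x_t^\star, y_t^\star\}_{t=1}^T$ of~\eqref{eq:reformulation}, I would substitute $y_t=y_t^\star$ everywhere. The inter-period constraints are then automatically satisfied by feasibility of $y^\star$, and the residual problem collapses into the sum of $|\mathcal{T}|$ decoupled subproblems in the variables $x_t$, each subject only to the intra-period constraints at time $t$ with $y_t$ frozen at $y_t^\star$. This is precisely the SED~\eqref{eq:SED} evaluated at $y_t=y_t^\star$.

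Next I would perform a standard optimality-transfer argument: if some SED subproblem admitted a feasible $\tilde x_t \neq x_t^\star$ with strictly smaller per-period cost, then replacing $x_t^\star$ by $\tilde x_t$ in the full solution would preserve feasibility of~\eqref{eq:reformulation} (intra-period constraints hold by construction; inter-period constraints are untouched because they involve only $y$) and would strictly reduce the total objective, contradicting optimality of $\{x_t^\star, y_t^\star\}$. Hence $x_t^\star$ is optimal for each SED subproblem, and strict convexity of SED in $x_t$ upgrades this to \emph{uniqueness}, so the sequential SED solver can only return $\{x_t^\star\}_{t=1}^T$.

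The argument is conceptually routine, and the main care lies in the bookkeeping of steps~(i)--(ii): one must verify that the power-balance equation~\eqref{powerbalance-1}, which is the only constraint genuinely coupling $x_t$ and $y_t$ \emph{within} the same period, indeed becomes an intra-period constraint once $y_t$ is frozen, and that no inter-period link involves any component of $x_t$. Both facts are immediate from the model but deserve an explicit remark, as they are what makes the separability, and therefore the whole equivalence, go through.
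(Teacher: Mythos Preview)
Your proposal is correct and follows essentially the same approach as the paper: fix $y_t=y_t^\star$, observe that the inter-period constraints are satisfied and the problem decouples into $T$ independent SED subproblems, then invoke strict convexity for uniqueness. Your version is more explicit about the constraint bookkeeping and the optimality-transfer contradiction, but the underlying argument is identical to the paper's (very terse) proof.
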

\begin{proof}
Given $y_t=y_t^\star$, problem~\eqref{eq:reformulation} can be decomposed into \( T \) independent subproblems, each of the form \eqref{eq:SED}. By convexity and uniqueness of each subproblem, solving each \eqref{eq:SED} yields the same \( x_t^\star \) as the optimal solution to problem~\eqref{eq:reformulation}.  
\end{proof}

\begin{figure*}[t] 
    \setlength{\abovecaptionskip}{-0.1cm}  
    \setlength{\belowcaptionskip}{-0.1cm}   
    \centerline{\includegraphics[width=2\columnwidth]{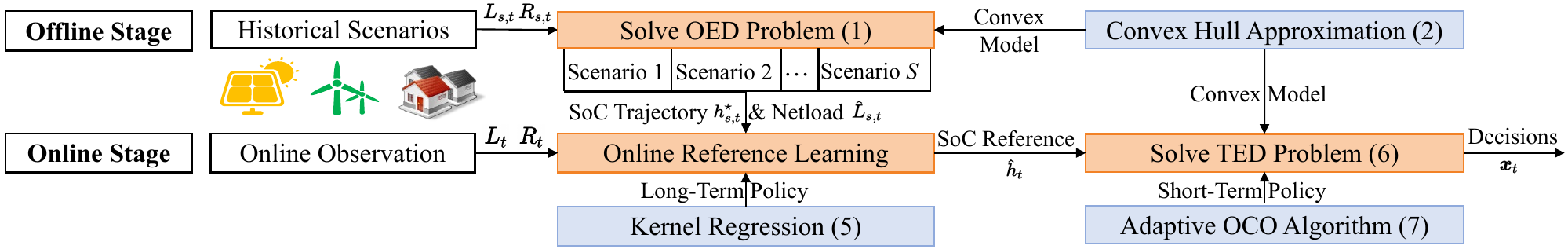}}
    \caption{Non-anticipatory long-short-term coordinated dispatch framework.}
    \captionsetup{justification=centering}
    \label{fig:framework}
\end{figure*}

\section{Dispatch Framework and Regret Analysis}\label{framework}

We propose a non-anticipatory long-short-term coordinated dispatch framework (Fig.\ref{fig:framework}) combining offline training and online control. In the offline stage, we generate long-term scenarios of renewable generation and load using historical or synthetic data\cite{li2024long,fu2025knowledge} to enhance robustness against extreme events. The online model is trained using the hindsight-optimal SoC solved from the offline problem, along with netload data. In the online stage, SoC references are updated in real-time via kernel regression, while an adaptive OCO algorithm with reference and expert tracking ensures near-long-term optimal, non-anticipatory decisions. We further establish sublinear regret bounds to provide theoretical performance guarantees.



\subsection{Online SoC Reference Learning}

Inspired by Proposition~\ref{prop:separation}, we propose a non-parametric conditional imitation policy for online SoC reference learning based on kernel regression, whose estimation accuracy can be rigorously characterized by the mean squared error (MSE) in~\eqref{eq:mse_kernel}. The MSE is decomposed into bias and variance, which provides critical theoretical insights for our algorithm design.
\begin{equation}\label{eq:mse_kernel}
\text{MSE}=\underbrace{\frac{\sigma^4}{4}\left(\int v^2K(v)dv\right)^2\Lambda^2}_{\text{Bias}^2}+\underbrace{\frac{\nu^2}{S\sigma^\iota\pi(\xi)}\int K^2(v)dv}_{\text{Variance}}
\end{equation}
\noindent where $\sigma$ controls the smoothness and locality of the kernel estimator. $\iota$ denotes the dimension of the input vectors. $\Lambda$ denotes the Lipschitz constant and represents the smoothness of the underlying mapping. $\nu^2$ denotes the conditional variance. $\pi(\xi)$ denotes the density of input data $\xi$. $K$ is the kernel function. $S$ is the sample size (input scenario number).

The proposed learning algorithm involves procedures for data initialization, offline training for optimal selection of bandwidth $\sigma$ and window size $W$, and online learning, thereby systematically and effectively reducing estimation errors.

\textbf{Step 1: Data Initialization.} We construct the input vector of normalized netload and historical SoC from periods $t-W$ to $t-1$. The training inputs ${\xi}_{s\text{,}t}$ and test inputs ${\xi}_{t}$ are defined in~\eqref{historicalsequence}-\eqref{observesequence}. Compared to previous work~\cite{guo2023long,qi2025long}, we introduce several theoretical improvements: (1) we reduce input dimensionality by using netload $\hat{L}_t$ instead of separate load and RES data, and adopt a rolling optimal window size rather than expanding it with time; (2) normalization improves numerical stability and bandwidth selection; (3) incorporating historical SoC explicitly captures temporal dependencies, reducing variance and improving prediction; and (4) we focus learning on LDES SoC due to its more stable long-term dynamics, while treating the rapidly fluctuating BES SoC as a slack variable in online optimization.

\textbf{Step 2: Offline Training.} The only hyperparameters requiring tuning are the bandwidth $\sigma$ and the window size $W$. Specifically, increasing the window size reduces bias by capturing richer temporal information but simultaneously enlarges the input dimension, thereby increasing variance. On the other hand, the bandwidth controls the degree of kernel smoothing: a larger bandwidth reduces variance at the cost of increased bias. In practical implementation, we first select discrete candidate values for the window size $W$. For each candidate $W$, the optimal bandwidth $\sigma^*$ is determined by minimizing the MSE as in~\eqref{sigmaoptimal}. Subsequently, the optimal window size $W^*$ is selected as the one achieving the minimal MSE. Additionally, the scenario number $S$ and kernel type $K$ also influence the learning performance. These factors will be analyzed in detail through the subsequent case studies.

\textbf{Step 3: Online Learning.} The similarity between ${\xi}_{t}$ observed online and each historical scenario ${\xi}_{s,t}$ is measured using the Gaussian kernel function based on the Euclidean distance, yielding dynamic weights $\omega_{s,t}$ as defined in~\eqref{weight}. To capture cumulative temporal effects, the Gaussian kernel function is modified by the window size. Finally, the dynamic weights are updated, and the corresponding SoC trajectory $\hat{h}_{t}$ for LDES is computed as the weighted average value in~\eqref{referweight}.
\begin{subequations}\label{refupdate}
\begin{align}
& {\xi}_{s\text{,}t}=\{\hat{L}_{s\text{,}t-W}\text{,}h^{\star}_{s\text{,}t-W}\text{,}\cdots\text{,}\hat{L}_{s\text{,}t-1}\text{,}h^{\star}_{s\text{,}t-1}\}\label{historicalsequence}\\
&{\xi}_{t}=\{\hat{L}_{t-W}\text{,}h^{}_{t-W}\text{,}\cdots\text{,}\hat{L}_{t-1}\text{,}h_{t-1}\}
   \label{observesequence} \\
   & \sigma^\star(W^\star) = \left(\frac{2W^\star \nu^2 \int K^2(v) dv}{N \Lambda^2 (\int v^2 K(v) dv)^2 \pi(\xi)}\right)^{\frac{1}{2W^\star+4}}\label{sigmaoptimal}\\
   &\omega_{s\text{,}t}=\dfrac{K_{t}(\xi_{t}\text{,}\xi_{s\text{,}t})}{\sum_{{s^{\prime}=1}}^{S}K_{t}(\xi_{t}\text{,}\xi_{s^{\prime}\text{,}{t}})}\text{,}\ K_{t}(\xi\text{,}\xi^{'})=e^{{-\frac{(\|\xi-\xi^{'}\|_{2})^{2}}{W^\star{\sigma^\star}^2}}}\label{weight}\\
&\hat{h}_{t}=\sum\nolimits_{{s=1}}^{S}\omega_{s\text{,}t} h_{s\text{,}t}^{\star}\label{referweight}
\end{align}
\end{subequations}

Compared to neural network-based learning approaches, the proposed method offers several notable advantages. First, it features low computational complexity, enabling fast real-time updates suitable for online optimization. Second, it requires minimal hyperparameter tuning, thus simplifying model deployment. Additionally, it naturally adapts to system states and configuration changes, as it only requires updating offline-generated scenarios without modifying model structures. Finally, its inherent interpretability facilitates easier analysis and troubleshooting, enhancing the reliable microgrid operations.

\subsection{Online Dispatch Optimization}

To incorporate opportunities into SED, we propose a \textbf{trajectory-guided single-period economic dispatch (TED)}, which includes a quadratic penalty term $\|h_t-\hat{h}_t\|^2$ with coefficient $\theta$ to track the long-term SoC reference. We prove in the Appendix~\ref{appendix1} that tracking the long-term reference is equivalent to explicitly addressing the time-coupling constraints~\eqref{E-balance-H} and~\eqref{E-power-H-1}. \eqref{eq:TED} shows the TED formulation. 
\begin{align}\label{eq:TED}
\hspace{-0.3cm}&\min\big(c^{\text{l}}l_{t}+c^{\text{d}}d_{t}+c^{\text{b}}b_{t}^{+}+c^{\text{h}} \sum_{m\in\mathcal{M}}\lambda_{m\text{,}t}^{+}P_{m}^{+}\big)+\theta\|h_t-\hat{h}_t\|^2 \\
\hspace{-0.3cm}\text{s.t. }&\eqref{DGpower}-\eqref{REScurtailment}\text{, }\eqref{E-bound-H}\text{, }\eqref{powerbalance-1}-\eqref{piecewise}\notag
\end{align}

TED admits a compact form in~\eqref{eq:compact}. $f_t$, $g_t$, and $\boldsymbol{x}_{t}$ denote the time-varying objective function, constraints, and decision variables, respectively. We propose an adaptive virtual-queue-based OCO algorithm to achieve a non-anticipatory decision policy that does not rely on long-term forecasts. We incorporate expert-tracking to address the inherent sensitivity of OCO to the stepsize, where each expert $i$ operates with distinct stepsizes $\alpha_{i\text{,}t-1}$ \text{and} $\beta_{i\text{,}t-1}$. At each time step, the virtual queue $Q_{i\text{,}t}$ and decisions $\boldsymbol{x}_{i\text{,}t}$ are updated in~\eqref{eq:Q_update} and~\eqref{eq:x_update} for each expert in parallel, respectively. $\left\langle \psi\text{,}\psi'\right\rangle$ denotes the standard inner product of two vector $\psi$ and $\psi'$. $X$ denotes the feasible sets. Subsequently, the surrogate loss $\ell{i,t}$ is computed via \eqref{eq:loss}. Finally, the weights $\rho_{i,t}$ assigned to each expert are updated based on their empirical performance measured by the surrogate losses, as detailed in \eqref{eq:final_decision}. The ultimate decision is determined as the weighted average of all experts' decisions.
\begin{subequations}\label{OCO}
\begin{align}
& \min \ f_t(\boldsymbol{x}_{t})\quad \text{s.t. } g_t(\boldsymbol{x}_{t})\leq0\text{, }\boldsymbol{x}_{t}=\{x_t\text{,}y_t\}  \label{eq:compact}\\
&Q_{i\text{,}t-1}=Q_{i\text{,}t-2}+\beta_{i\text{,}t-1}[{g}_{t-1}(\boldsymbol{x}_{i\text{,}t-1})]_+\label{eq:Q_update}\\
& \boldsymbol{x}_{i\text{,}t}=\arg\min_{\boldsymbol{x}\in \mathcal{X}}\{\alpha_{i\text{,}t-1}\left\langle\partial f_{t-1}(\boldsymbol{x}_{i\text{,}t-1})\text{,}\ \boldsymbol{x}-\boldsymbol{x}_{i\text{,}t-1}\right\rangle\label{eq:x_update}\\
&\hspace{0.5cm}+\alpha_{i\text{,}t-1}\beta_{t-1}\left\langle Q_{i\text{,}t-1}\text{,}\ [g_{t-1}(\boldsymbol{x})]_+\right\rangle+\|\boldsymbol{x}-\boldsymbol{x}_{i\text{,}t-1}\|^2\}\notag \\
& \ell_{i\text{,}t-1}=\langle\partial {f}_{t-1}(\boldsymbol{x}_{t-1})\text{,}\ \boldsymbol{x}_{i\text{,}t-1}-\boldsymbol{x}_{t-1}\rangle\label{eq:loss}\\
& \rho_{i\text{,}t}=\frac{\rho_{i\text{,}t-1}e^{-\gamma \ell_{i\text{,}t-1}}}{\sum_{i=1}^N\rho_{i\text{,}t-1}e^{-\gamma \ell_{i\text{,}t-1}}}\text{, }\boldsymbol{x}_t=\sum\nolimits_{i=1}^N\rho_{i\text{,}t}\boldsymbol{x}_{i\text{,}t}\label{eq:final_decision}
\end{align}
\end{subequations}

The key idea of the proposed OCO algorithm is to leverage information from the previous time step to approximate the current system state, while maintaining feasibility through adaptive virtual queues, penalties for constraint violations. Specially, ${f}_{t}(\boldsymbol{x}_t)$ and ${g}_t(\boldsymbol{x}_t)$ are approximated using the Taylor expansion $\left\langle\partial f_{t-1}(\boldsymbol{x}_{i\text{,}t-1})\text{,}\ \boldsymbol{x}-\boldsymbol{x}_{i\text{,}t-1}\right\rangle$ and clipped constraint function $\left[{g}_{t-1}(\boldsymbol{x})\right]_{+}$. Compared to the existing OCO framework, we utilize partial Lagrangian relaxation only for non-anticipatory constraints and generate virtual queues to substitute the corresponding dual variables. The value of the virtual queue increases when constraints are violated, dynamically adjusting the penalty associated with constraint violations. The Bregman divergence $\left|\boldsymbol{x}-\boldsymbol{x}_{i\text{,}t-1}\right|^2$ is introduced to ensure decision stability and convergence. Moreover, the weights $\rho_{i\text{,}t}$ prioritize experts demonstrating superior performance based on surrogate losses, enabling adaptive stepsize selection.

\subsection{Theoretical Analysis on Dynamic Regret Bound}
\begin{theorem}\label{tm}  \textbf{Sublinear Dynamic Regret Bound.}
Given convex functions $f_t$ and $g_t$ defined on a convex, closed set $\mathcal{X}$ with bounded diameter, assume $F\text{,} J > 0$ exist such that $\forall x\text{,}y \in \mathcal{X}$:
\begin{align}\label{condition}
|f_t(x)-f_t(y)|\text{,}\; \|g_t(x)\|\leq F\text{,}\quad\|\partial f_t(x)\|\text{,}\;\|\partial g_t(x)\|\leq J
\end{align}
With parameters set as in~\eqref{parameter}, we can achieve the dynamic regret bound in~\eqref{regret}.
\begin{equation}\label{parameter}
\begin{aligned}
& \alpha_{i\text{,}t}=\dfrac{\alpha_{0}2^{i-1}}{t^{c}}\text{,}\ \beta_{i\text{,}t}=\dfrac{\beta_{0}}{\sqrt{\alpha_{i\text{,}t}}}\text{,} \ \gamma=\dfrac{\gamma_{0}}{T^{c}}\text{, }M=M_0T^{c}\text{, }\\&\theta=\theta_{0}{T^{c}}\text{, }S=S_{0}{T^{c}}\text{, }N=\lfloor\kappa \log_{2}(1+T)\rfloor+1 \ 
\end{aligned}
\end{equation}
\begin{equation}\label{regret}
\begin{aligned}
\text{Regret}&=\mathcal{O}(T^c (1 + P_x)^{1 - \kappa} + T^{1 - c} (1 + P_x)^{\kappa})\\&+\mathcal{O}({T}^{{1-c/2}})+\mathcal{O}(T^{1-c})+\mathcal{O}(T^{1-2c})
\end{aligned} 
\end{equation}
where $\kappa \in [0\text{,}c]$, $c \in (0\text{,}1)$, $\alpha_0\text{, }\beta_0\text{, }M_0\text{, }\theta_0\text{, }S_0> 0$, $\gamma_0 \in (0\text{,} {1}/{\sqrt{2J}})$\text{, }$P_x=\sum_{t=1}^{T-1}\left\|\boldsymbol{x}_{t+1}-\boldsymbol{x}_t\right\|$.
\end{theorem}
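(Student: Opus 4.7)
The plan is to decompose the dynamic regret into four additive error sources matching the four terms on the right-hand side of~\eqref{regret}: (i) the OCO dynamic-regret of the adaptive virtual-queue algorithm against an arbitrary comparator sequence, (ii) the cumulative constraint-violation cost carried by the virtual queues, (iii) the penalty incurred by replacing the time-coupling SoC constraints with the quadratic reference-tracking term $\theta\|h_t-\hat h_t\|^2$ (whose equivalence is established in Appendix~\ref{appendix1}), and (iv) the convex-hull approximation error from Appendix~\ref{appendix2}. Proposition~\ref{prop:separation} guarantees that once the long-term trajectory is pinned down the single-period subproblems can be treated independently, making this decomposition lossless.

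First I would analyze a single expert $i$ with fixed stepsizes $\alpha_{i,t}$ and $\beta_{i,t}$. Viewing~\eqref{eq:x_update} as a projected mirror-descent step on the linearized Lagrangian $\langle\partial f_{t-1}(\boldsymbol{x}_{i,t-1}),\boldsymbol{x}\rangle+\beta_{t-1}\langle Q_{i,t-1},[g_{t-1}(\boldsymbol{x})]_+\rangle$ with squared-Euclidean Bregman regularizer, the three-point identity together with the boundedness assumptions~\eqref{condition} gives the standard per-step inequality. Telescoping in $t$ and invoking the path variation $P_x$ to track the drift of the comparator bounds the per-expert dynamic regret by a sum of the form $(D^2+P_x)/\alpha_{i,T}+J^2\sum_t\alpha_{i,t}+\|Q_{i,T}\|^2/\beta_0^2$, where $D$ is the diameter of $\mathcal{X}$. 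Summing~\eqref{eq:Q_update} under~\eqref{condition} controls the queue and, for the prescribed $\beta_{i,t}=\beta_0/\sqrt{\alpha_{i,t}}$, yields the $\mathcal{O}(T^{1-c/2})$ term. The oracle stepsize for a known $P_x$ would be $\alpha^\star\sim(1+P_x)^{\kappa}T^{-c}$; because the doubling grid $\alpha_{i,t}=\alpha_0 2^{i-1}t^{-c}$ with $N=\lfloor\kappa\log_2(1+T)\rfloor+1$ covers this range geometrically, some expert $i^\star$ is always within a factor of two of $\alpha^\star$, producing the composite $\mathcal{O}(T^c(1+P_x)^{1-\kappa}+T^{1-c}(1+P_x)^{\kappa})$ rate.

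Next I would lift this to the meta-layer. A standard Hedge analysis applied to the surrogate losses~\eqref{eq:loss} with $\gamma=\gamma_0 T^{-c}$ and $N=\Theta(\log T)$ contributes $\mathcal{O}(\log N/\gamma+\gamma T F^2)$ additional regret of the aggregate $\boldsymbol{x}_t$ against the best expert, which is dominated by the per-expert bound. The reference-tracking piece is then handled by bounding $\sum_t\|h_t-\hat h_t\|^2$ through the kernel-regression MSE~\eqref{eq:mse_kernel} evaluated at the optimal $(\sigma^\star,W^\star)$ of~\eqref{sigmaoptimal}; with $S=S_0 T^c$ the MSE decays at rate $T^{-c}$, so after multiplying by $\theta=\theta_0 T^c$ (the coupling constant from Appendix~\ref{appendix1}) and summing over $T$ steps one obtains the $\mathcal{O}(T^{1-c})$ term. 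Finally, Appendix~\ref{appendix2} shows the convex-hull inflation shrinks at rate $1/M^2$; with $M=M_0 T^c$ its cumulative contribution is $\mathcal{O}(T^{1-2c})$.

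Adding the four pieces reproduces~\eqref{regret} exactly. The hard part will be the simultaneous calibration of $\alpha_{i,t}$ and $\beta_{i,t}$ so that the virtual queue norm remains $\mathcal{O}(T^{1-c/2})$ uniformly in $i$ while the per-expert dynamic-regret term still matches the oracle stepsize for an unknown $P_x$; the doubling grid together with the exponentially weighted meta-layer is precisely what removes this dependence, but making the queue-norm bound and the telescoping fit together without parasitic $\log T$ factors and ensuring the Hedge regret does not pollute the dominant path-length rate requires care. The remaining bias-variance trade-off in~\eqref{eq:mse_kernel} and the convex-hull inflation plug in mechanically once $\sigma^\star$, $W^\star$, $S$, and $M$ from~\eqref{parameter} are substituted.
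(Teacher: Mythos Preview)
Your decomposition is mislabeled in ways that would make the argument fail. The paper's four pieces are (1) convex-hull approximation error, (2) OCO regret of the algorithm against the \emph{TED} optimizer $(x_t^\dagger,y_t^\dagger)$, (3) the tracking-penalty error $\sum_t\theta[\|h_t^\dagger-\hat h_t\|^2-\|h_t-\hat h_t\|^2]$, and (4) an \emph{optimality gap} $\sum_t[\tilde f_t(x_t^\dagger,y_t^\dagger)-\tilde f_t(x_t^\star,y_t^\star)]$ between the TED optimizer and the oracle OED optimizer. You have no analogue of (4), and you introduce a separate ``constraint-violation'' piece (ii) that the paper does not need: in their OCO analysis the comparator $\boldsymbol{x}_t^\dagger$ is feasible, so $[g_t(\boldsymbol{x}_t^\dagger)]_+=0$ and the virtual-queue term vanishes from the regret bound entirely. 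The $\mathcal{O}(T^{1-c/2})$ term does \emph{not} come from the queue.

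Two specific steps in your plan would not go through. First, you propose to bound $\sum_t\|h_t-\hat h_t\|^2$ by the kernel-regression MSE, but the MSE in~\eqref{eq:mse_kernel} controls $\|\hat h_t-h_t^\star\|$ (reference versus hindsight optimum), not $\|h_t-\hat h_t\|$ (online iterate versus reference). The paper instead uses the KKT stationarity condition of TED to show $\|h_t^\dagger-\hat h_t\|=\mathcal{O}(1/\theta)$, which gives the $\mathcal{O}(T/\theta)=\mathcal{O}(T^{1-c})$ rate for the tracking-penalty error. Second, the $\mathcal{O}(T^{1-c/2})$ term actually arises from the missing optimality-gap component: Lipschitz continuity of $\tilde f_t$ and an affine map from $(x_t,y_t)$ to $h_t$ reduce the gap to $\|h_t^\dagger-h_t^\star\|$, which is split via the triangle inequality into $\|h_t^\dagger-\hat h_t\|+\|\hat h_t-\tilde h_t\|+\|\tilde h_t-h_t^\star\|$; the middle piece is the kernel-regression error, and the optimal-MSE bound $\mathcal{O}(S^{-4/(4+\iota)})$ with $S=S_0T^c$ yields the $T/S^{2/(4+\iota)}$ contribution that the theorem records as $T^{1-c/2}$. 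Without this optimality-gap bridge you never connect the OCO comparator $(x_t^\dagger,y_t^\dagger)$ to the oracle $(x_t^\star,y_t^\star)$ against which regret is defined.
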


Theorem~\ref{tm} demonstrates a sublinear dynamic regret bound for the proposed dispatch framework, which improves with more kernel regression scenarios, larger reference tracking penalty, and more convex hull segments. Compared to our previous work~\cite{qi2025long} and other state-of-the-art OCO algorithms, we focus on the performance of both long-term and short-term policies, rather than focusing solely on the short-term policy derived from the OCO algorithm itself. The derived dynamic regret bound provides the microgrid operator with a robust understanding of algorithm performance and facilitates better parameter tuning. The proposed dispatch framework is outlined in \textbf{Algorithm}~\ref{algorithm1}.  We defer the complete proof to Appendix~\ref{appendix1}.

\begin{algorithm}[t]\label{algorithm1}
\caption{Long-Short-Term Coordinated Dispatch}
\SetAlgoLined
\SetEndCharOfAlgoLine{}
\textbf{\hspace{-0.4cm}Stage1: Offline scenario and SoC trajectory generation}

\hspace{-0.2cm}\KwIn{Historical load ${L}_{s\text{,}t}$ and RES ${R}_{s\text{,}t}$.}
\hspace{-0.2cm}\KwOut{hindsight-optimal SoC trajectory of LDES ${h}_{s\text{,}t}^{\star}$.}
\SetKw{Parallel}{parallel}
    \For{$s=1$ \KwTo $S$ }{
        Solve OED with reformulation~\eqref{eq:reformulation} for each scenario.\;
    }
\textbf{\hspace{-0.4cm}Stage2: Online SoC reference learning and dispatch}

\hspace{-0.2cm}\KwIn{Parameters setting in~\eqref{parameter}. }
\hspace{-0.2cm}\KwOut{SoC reference $\hat{h}_{t}$ and dispatch decisions $x_{t}\text{, }y_{t}$.}
\SetKwBlock{StepOne}{Step 1 -Initialization}{}
\SetKwBlock{StepTwo}{Step 2 - Reference Update and Online Optimization}{}
\StepOne{
Set $Q_{i\text{,}0}=0\text{,}\ \boldsymbol{x}_{i\text{,}1}\in \mathcal{X}\text{,}\ \boldsymbol{x}_1=\sum_{i=1}^N\rho_{i\text{,}1}\boldsymbol{x}_{i\text{,}1}$\text{,}\\$\rho_{i\text{,}1}=(N+1)/[i(i+1)N]\text{,}\ \forall i\in\{1\text{,}2\text{,}\cdot\cdot\cdot\text{,}N\}.$
}
\StepTwo{
    \For{$t=2$ \KwTo $T$ }{
      Update SoC reference via kernel regression~\eqref{refupdate}. \;
      \For{$i=1$ \KwTo $N$ \Parallel}{
      Update virtual queue $Q_{i\text{,}t}$ and decisions $\boldsymbol{x}_{i\text{,}t}$ \; in parallel via~\eqref{eq:Q_update}-\eqref{eq:x_update}; Calculate surrogate\; loss in parallel via~\eqref{eq:loss}.  
      }
      Calculate expert weights and decisions via~\eqref{eq:final_decision}
    }
}
\end{algorithm}

\section{Numerical Case Study}\label{Case Study}


We demonstrate the effectiveness of the proposed dispatch framework on an isolated microgrid in Alaska. The diagram of the test system is illustrated in~\ref{fig:system}, comprising 100 kW wind, 100 kW solar, 150 kW load, 50 kW diesel generation, 50 kW/200 kWh BES, and 100 kW/1000 kg LDES. The initial SoC and efficiency of BES are set to 0.5 and 90\%, respectively. The initial SoC and final SoC target of LDES are set to 0.2 and 0.5, respectively. The nonconvex model of LDES is adopted from the semi-empirical model~\cite{sanchez2018semi}. The ground-truth data for renewable generation and load power from 1984 to 2024 are publicly available~\cite{Data}.

The optimization is performed hourly over an entire year and implemented in MATLAB with Gurobi 12.0 solver. The computational environment is an Intel Core i9-13900HX CPU @ 2.30 GHz with 16 GB RAM.

\begin{figure}[t] 
    \setlength{\abovecaptionskip}{-0.1cm}  
    \setlength{\belowcaptionskip}{-0.1cm}   
\centerline{\includegraphics[width=0.9\columnwidth]{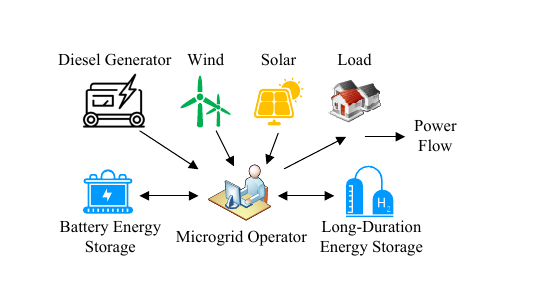}}
   \caption{Diagram of the test microgrid system.}    \label{fig:system}
\end{figure}

\subsection{Analysis on Nonconvex Model Approximation}

Table~\ref{table:nonconvex} compares the proposed convex hull approximation with the state-of-the-art piecewise linear approximation for the nonconvex LDES model. Both methods converge to near-optimality with increasing sample number (segments), but at the cost of increased computational time, indicating a trade-off between optimality and computational efficiency. Moreover, the constant efficiency model (sample number = 2) results in an optimality gap of 11.4\%, highlighting the advantage of the nonconvex and dynamic efficiency model, which accurately captures the electrochemical dynamics of LDES. 

Moreover, the piecewise linear method becomes intractable for offline trajectory generation when the sample number exceeds 60, whereas the convex hull method consistently maintains a low computational cost even with 100 samples. This is because the piecewise linear approximation introduces binary variables, causing the computational time to increase exponentially with the number of segments. Therefore, the piecewise linear method remains impractical for updating offline trajectories in response to operational mode changes or contingencies in microgrids. Furthermore, the binary variables introduced by this method exclude the application of most gradient-based online optimization techniques. Next, we employ the convex hull method with 100 samples for subsequent analysis.
\begin{table}[t]
\centering
\caption{Comparison of Operational Performance under Different Approximation Methods}
  \setlength{\tabcolsep}{0.3mm}{
    \begin{tabular}{ccccccc}
    \toprule
\multirow{2}[4]{*}{\makecell{Sample\\Number}} & \multicolumn{2}{c}{Annual Cost ($10^3$\$)} & \multicolumn{2}{c}{Annual Runtime (s)} & \multicolumn{2}{c}{Single-Period Runtime (s)}\\
\cmidrule{2-7}          & \makecell{Piecewise\\Linear} & \makecell{Convex\\Hull} & \makecell{Piecewise\\Linear} & \makecell{Convex\\Hull} & \makecell{Piecewise\\Linear} & \makecell{Convex\\Hull} \\
    \midrule
    2 & 39.47  & 39.47  & 19.03  & 8.33  & 0.06  & 0.01  \\
    10    & 35.56  & 35.55  & 181.09  & 11.65  & 0.07  & 0.01  \\
    20    & 35.45  & 35.45  & 268.18  & 12.65  & 0.06  & 0.01  \\
    40    & 35.44  & 35.44  & 3948.36  & 26.33  & 0.07  & 0.02  \\
    60    & \multirow{3}[1]{*}{——} & 35.44  & \multirow{3}[1]{*}{$>$2 h} & 29.45  & 0.07  & 0.03  \\
    80    &       & 35.44  &       & 24.70  & 0.07  & 0.05  \\
    100   &       & 35.44  &       & 24.98  & 0.07  & 0.05  \\
    \bottomrule
\end{tabular}\label{table:nonconvex}
}
\end{table}

\begin{figure}[ht!] 
    \setlength{\abovecaptionskip}{-0.1cm}  
    \setlength{\belowcaptionskip}{-0.1cm}   
\centerline{\includegraphics[width=0.9\columnwidth]{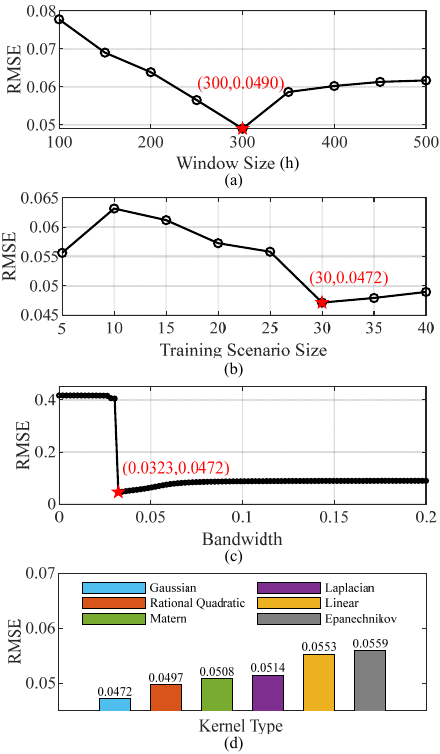}}
    \caption{Training process and testing performance of kernel regression.}   \label{fig:kernel}
\end{figure}

\subsection{Analysis on Online SoC Reference Learning Performance}

\subsubsection{Effectiveness of the proposed kernel regression} we first illustrate the training process of the proposed kernel regression method in Fig.\ref{fig:kernel}. In Fig.\ref{fig:kernel} (a), the RMSE initially decreases with an increasing window size and then rises, indicating an optimal window size at approximately 300 h, thereby empirically validating the theoretical analysis presented in~\eqref{eq:mse_kernel}. This optimal window is notably close to the duration of LDES (around 333.33 h). Moreover, Fig.\ref{fig:kernel} (b) indicates that the training performance generally improves with an increasing training scenarios. This aligns with part of the theorem and the theoretical bound in~\eqref{optimal_MSE}. Furthermore, as illustrated in Fig.\ref{fig:kernel} (c), once the optimal window size is determined, the corresponding optimal bandwidth can be obtained, consistent with the theoretical result provided in~\eqref{sigmaoptimal}. We also compare different kernel types, with optimally tuned hyperparameters in Fig.~\ref{fig:kernel} (d). The Gaussian kernel significantly outperforms the others due to its smoothness and infinite differentiability, enabling better learning of the underlying relationships.

\subsubsection{Comparative analysis on SoC reference learning} we benchmark the proposed kernel regression against several state-of-the-art long-term reference learning methods:

\textbf{(i) Kernel-1}: The proposed kernel regression method, which utilizes both historical SoC and netload for training.

\textbf{(ii) Kernel-2}: The kernel regression method proposed in~\cite{guo2023long,qi2025long}, which utilizes only historical netload for training.

\textbf{(iii) Average}: A rule-based method using the average of historical SoC trajectories.

\textbf{(iv) Long Short-Term Memory} and \textbf{(v) Convolutional Neural Network}: two neural network-based methods~\cite{kim2019predicting}.

We compare the SoC trajectories learned by the first three methods against the hindsight-optimal trajectory in Fig.~\ref{fig:reference} (a). The results of the last two methods, which keep the SoC around 0.45 with relatively poor performance, are excluded from the figure to avoid cluttering the comparison. The RMSE values of the five methods are 0.047, 0.070, 0.087, 0.128, and 0.123, respectively, with neural network-based methods performing the worst, highlighting their limitations in capturing long-term coupled dynamics. Compared to the average historical value, kernel regression dynamically updates the SoC reference by incorporating the most recent observations and adjusted weights, as shown in Fig.~\ref{fig:reference} (b), resulting in superior performance during the winter and spring seasons. Furthermore, compared to previous works on kernel regression, the proposed method further reduces the RMSE by 32.8\% through the incorporation of historical SoC values. This significantly reduces the variance by introducing new relevant factors, as future SoC dynamics depend on both future netload and past SoC.
\begin{figure}[t] 
    \setlength{\abovecaptionskip}{-0.1cm}  
    \setlength{\belowcaptionskip}{-0.1cm}   
\centerline{\includegraphics[width=0.9\columnwidth]{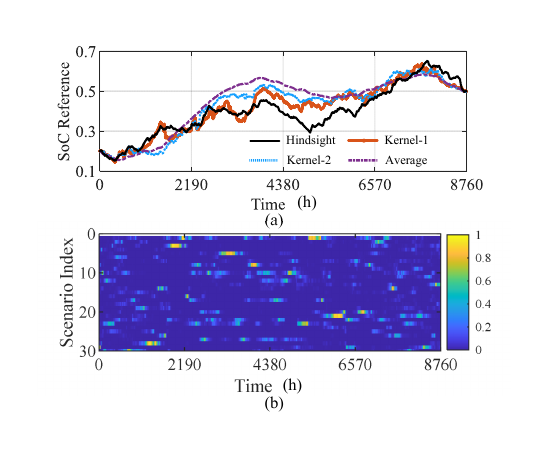}}
    \caption{LDES SoC trajectories learning results: (a) performance comparison across methods and (b) dynamically updated scenario weights.}    \label{fig:reference}
\end{figure}

\subsection{Analysis on Online Dispatch Performance}

\subsubsection{Effectiveness of the proposed OCO} Fig.~\ref{fig:penalty} (a) illustrates the performance of the proposed long-short-term coordinated dispatch method under different penalty coefficients for reference tracking. It is observed that the operational performance is sensitive to the penalty coefficients. As the penalty coefficients increase, the RMSE of long-term reference tracking decreases, hence the final SoC gradually approaches the contract requirement, aligning well with the theoretical results in~\eqref{penalty_error}. For smaller penalty coefficients (below 1500), increasing the coefficient greatly reduces annual costs and load loss by mitigating the myopia of SED. We achieve a minimal cost of \$39,573.96 (marked with stars), but the final SoC of 0.31 fails to meet the contract requirement. For larger penalty coefficients (above 10,000), increasing the coefficient reduces long-term contract violations but sacrifices annual costs due to the gap between the learned SoC reference and the hindsight-optimal SoC. We achieve minimal long-term contract violations (final SoC = 0.496, marked with squares), and this results in a significantly high cost of \$75,632.45. In practice, we select the trade-off decision (marked with cross marks) and account for the contract violation penalty of \$10/kg, resulting in a final SoC of 0.45 at a final cost of \$45351.00.

Compared to the hindsight-optimal solution, Fig.~\ref{fig:penalty} (b) shows the regret change rate for the proposed method. Initially, the regret change rate is relatively large, then gradually reduces before eventually approaching zero. This verifies the sublinear regret bound as stated in the theorem.
\begin{figure}[t] 
    \setlength{\abovecaptionskip}{-0.1cm}  
    \setlength{\belowcaptionskip}{-0.1cm}   
\centerline{\includegraphics[width=0.95\columnwidth]{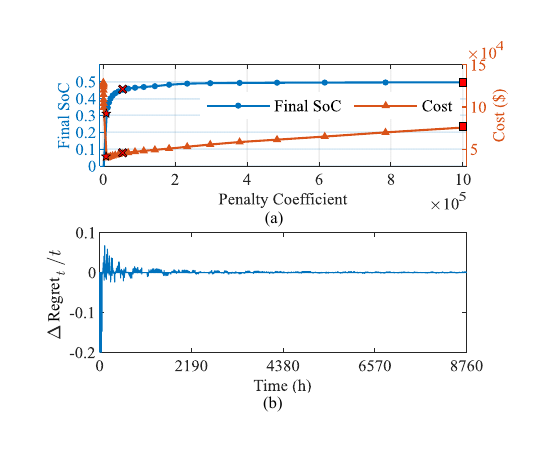}}
    \caption{Performance of the proposed method: (a) varying with penalty coefficients and (b) regret change rate.}    \label{fig:penalty}
\end{figure}

\subsubsection{Comparative analysis on online dispatch performance} we benchmark the proposed method against several state-of-the-art online optimization methods:

\textbf{(i) Perfect Foresight}: Under perfect knowledge of future uncertainties, we solve the OED using reformulation~\eqref{eq:reformulation}, serving as a hindsight-optimal baseline.

\textbf{(ii) OCO-1} The proposed long-short-term coordinated dispatch uses OCO with reference tracking for short-term control and Kernel-1 for long-term reference.

\textbf{(iii) MPC-1}: The long-short-term coordinated dispatch uses MPC with reference tracking~\cite{guo2023long} for short-term control and Kernel-1 for long-term reference.

\textbf{(iv) OCO-2}: The short-term dispatch based on SED formulation~\eqref{eq:SED} uses the proposed OCO without reference tracking.

\textbf{(v) MPC-2}: The short-term dispatch based on SED formulation~\eqref{eq:SED} uses the MPC without reference tracking.

Fig.~\ref{fig:opt} compares the operational performance of different methods, and Table~\ref{table:opt} summarizes their performance with various references. With the help of the long-term SoC reference, both MPC and OCO methods closely follow the hindsight-optimal SoC trajectory, resulting in relatively low cost and load loss. In contrast, short-term dispatch using either MPC or OCO generates myopic decisions, nearly depleting LDES during the winter without accounting for future opportunities and risks. This results in significant load loss (over 26 MWh) and regret (over \$140K) in subsequent seasons. Compared to MPC, the proposed OCO method leverages the most recent information without relying on future uncertainty, offering more reliable and robust solutions. Specifically, OCO achieves the lowest regret (\$9901.89), outperforming MPC with a cost reduction of 2.4\%. Furthermore, incorporating the reference has a significant impact on operational performance, enabling cost savings of approximately 73.4\% compared to operations without reference tracking. Compared with the average reference and the state-of-the-art reference, the proposed reference (kernel-1) further reduces costs by 2.8\% and 0.4\%, respectively. These results conclusively demonstrate the significant benefits of the proposed methods.
\begin{figure}[t] 
    \setlength{\abovecaptionskip}{-0.1cm}  
    \setlength{\belowcaptionskip}{-0.1cm}   
\centerline{\includegraphics[width=0.95\columnwidth]{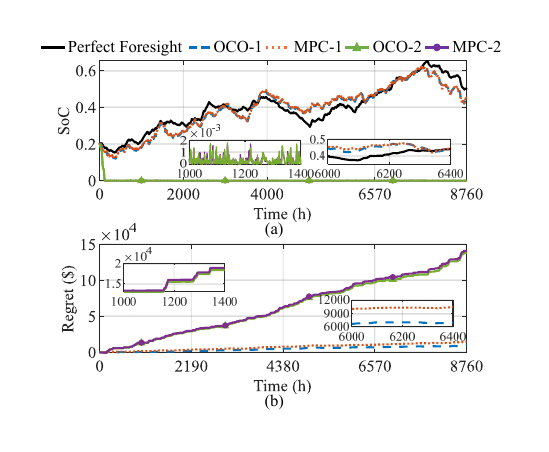}}
    \caption{Comparison of operational performance across different methods: (a) LDES SoC and (b) regret.}    \label{fig:opt}
\end{figure}
\begin{table}[t]
\centering
\caption{Comparison of Operational Performance with Differnt Methods and References}
  \setlength{\tabcolsep}{0.4mm}{
    \begin{tabular}{cccccc}
    \toprule
    \multirow{2}[4]{*}{Method} & \multirow{2}[4]{*}{Metric} & \multicolumn{4}{c}{Reference} \\
\cmidrule{3-6}          &       & Kernel-1 & Kernel-2 & Average & None \\
    \midrule
    \multirow{2}[1]{*}{Perfect Foresight} & Cost (\$) & \multicolumn{4}{c}{35449.11 } \\
          & Load Loss (kWh) & \multicolumn{4}{c}{0.00 } \\
    \multirow{2}[0]{*}{MPC} & Cost (\$) & 46419.87  & 48137.68  & 46508.56  & 176633.05  \\
          & Load Loss (kWh) & 0.00  & 0.00  & 0.00  & 26864.51  \\
    \multirow{2}[1]{*}{OCO} & Cost (\$) & \textbf{45351.00}  & 46271.28  & 45632.29  & 173741.53  \\
          & Load Loss (kWh) & \textbf{0.00}  & 0.00  & 0.00  & 26365.07  \\
    \bottomrule
\end{tabular}\label{table:opt}
}
\end{table}

\subsection{Sensitivity Analysis} 

\subsubsection{Results Sensitive to LDES Duration}

We evaluate the online learning and dispatch performance of the proposed method with varying LDES durations in Table~\ref{table:duration}. Increasing duration significantly reduces the RMSE, and the optimal window sizes closely match the LDES durations. This occurs because increasing the duration significantly reduces the SoC variations, from 0.74 at 500 kg to 0.382 at 2000 kg. This substantially lowers the Lipschitz constant, thereby reducing the RMSE of kernel regression, as theoretically established in~\eqref{eq:mse_kernel}. This indicates that kernel regression is better suited for long-duration storage but may have limitations when applied to short-duration storage. Moreover, for longer-duration LDES, the improved reference helps reduce the optimality gap of the proposed method from 34.83\% at 500 kg to 19.89\% at 2000 kg.
\begin{table}[t]
\centering
\caption{Comparison of Online Learning and Dispatch Performance under Different LDES Durations}
  \setlength{\tabcolsep}{0.35mm}{
    \begin{tabular}{cccccc}
    \toprule
    \makecell{LDES\\Capacity (kg)} & RMSE & \makecell{Optimal Window\\Size (h)} & \makecell{Optimal\\ Bandwidth} & SoC Range & \makecell{Optimality\\Gap (\%)} \\
    \midrule
    500   & 0.085  & 200   & 0.038  & 0.740 & 34.83 \\
    1000  & 0.047  & 300   & 0.032  & 0.499 & 27.93  \\
    1500  & 0.036  & 550   & 0.030  & 0.422  & 25.06\\
    2000  & 0.030  & 600   & 0.030  & 0.382 & 19.89 \\
    \bottomrule
\end{tabular}\label{table:duration}
}
\end{table}

\subsubsection{Results Sensitive to Forecast Error} As the Mean Absolute Percentage Error (MAPE) of forecast error increases, OCO outperforms MPC more significantly, with improvements of 2.30\% at 10\% MAPE, 10.62\% at 20\% MAPE, and 17.88\% at 30\% MAPE. In contrast, OCO remains stable regardless of forecast errors, as it only leverages previous information. Additionally, incorporating forecasts can further enhance the performance of OCO, as demonstrated in~\cite{chen2015online}.

\subsubsection{Results Sensitive to System Faults} 

Unexpected faults are inevitable in practical microgrid operations. We test the resilience of the proposed method in handling faults, as shown in Fig.~\ref{fig:adapt}. Upon the wind fault, offline SoC trajectories are regenerated based on current system states within minutes via parallel computing, providing updated references for online tracking. The wind fault causes an energy shortage, shifting the reference from charging trend (red) to discharging trend (yellow). After recovery, the reference guides LDES back to the charging trend until the DG fault occurs, shifting the reference to green lines. This adaptive reference closely aligns hindsight-optimal SoC trajectory, enabling near-optimal performance. The results show the resilience of the proposed method, offering significant advantages over learning-based methods, which require extensive retraining when system states change.
\begin{figure}[t] 
    \setlength{\abovecaptionskip}{-0.1cm}  
    \setlength{\belowcaptionskip}{-0.1cm}   
\centerline{\includegraphics[width=0.95\columnwidth]{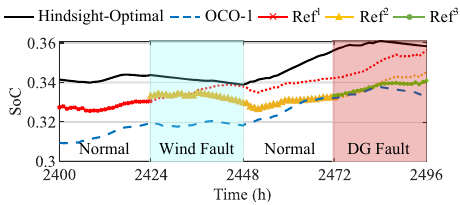}}
    \caption{Performance of the proposed method with adaptive references under unexpected system faults.}    \label{fig:adapt}
\end{figure}

\section{Conclusion and Discussion}\label{Conclusion}
This paper proposes a novel non-anticipatory long-short-term coordinated dispatch framework for isolated microgrid operation with hybrid BES and LDES. The proposed convex hull approximation effectively addresses the nonconvexity in the LDES model, ensuring computational tractability, scalability, and accuracy. To address temporal coupling and ensure non-anticipativity, we generate hindsight-optimal SoC trajectories of LDES and netloads for offline training. In the online stage, we employ kernel regression to dynamically update SoC reference and propose an adaptive OCO algorithm with reference and expert tracking. We prove that both long-term and short-term policies achieve sublinear dynamic regret bounds over time, and the bounds decrease with more kernel regression scenarios, larger reference-tracking penalty, and more convex-hull segments.  Simulation results on an isolated microgrid in Alaska demonstrate that the proposed method significantly outperforms state-of-the-art methods, achieving substantial system cost reductions and reliable power supply through reference tracking and the ``0-look-ahead'' OCO algorithm. The performance further improves with longer LDES durations, demonstrating resilience to poor forecasts and unexpected system faults.


\bibliographystyle{IEEEtran}
\bibliography{IEEEabrv,OCO}

\appendix

\subsection{Proof of Equivalence between Long-Term Trajectory Tracking and Explicit Time-Coupling Constraints} \label{appendix1}

On one hand, since the reference $\hat{h}_{t}$ satisfies the time-coupling constraints~\eqref{E-power-H-1}, enforcing a quadratic tracking term with a sufficiently large penalty $\theta$ implicitly ensures compliance with these constraints. On the other hand, through partial Lagrangian relaxation of constraint~\eqref{E-balance-H}, an additional penalty term emerges in the objective. In online dispatch, we can substitute SoC reference into the future decisions, which yields:
\begin{align}
\varphi(h_T-{H}_{T})^2&=\varphi(h_{t-1}+\sum_{\tau=t}^{T}\sum_{m\in\mathcal{M}}(\hat{\lambda}_{m\text{,}\tau}^{-}H_{m}^{-}-\hat{\lambda}_{m\text{,}\tau}^{+}H_{m}^{+})-{H}_{T})^2\notag\\
&=\varphi(h_t-\hat{h}_t)^2
\end{align}
where $\varphi$ is the Lagrange multiplier. Hence, we finish the proof.

\subsection{Proof of Theorem 1} \label{appendix2}

The dynamic regret is defined as the cumulative objective difference between the decisions obtained from the proposed method and the oracle optimal decisions, as illustrated in~\eqref{regret_def}. To rigorously analyze the regret, we decompose it into four components in~\eqref{regret_decomposition}: (1) convex hull approximation error, (2) OCO regret, (3) tracking penalty error, and (4) optimality gap.
\begin{subequations}
\begin{align}
&   \text{Regret}= \sum\nolimits_{t=1}^T[\hat{f}_t(x_t\text{,}y_t)-\hat{f}_t(x_t^{\star}\text{,}y_t^{\star})]\label{regret_def}  \\
& \text{Regret}=\underbrace{\sum\nolimits_{t=1}^T[\hat{f}_t(x_t\text{,}y_t)-\tilde{f}_t(x_t\text{,}y_t)+\tilde{f}_t(x_t^{\star}\text{,}y_t^{\star})-\hat{f}_t(x_t^{\star}\text{,}y_t^{\star})]}_{\text{Convex Hull Approximation Error}}\notag\\
&+\underbrace{\sum\nolimits_{t=1}^T[f(x_t\text{,}y_t)-f(x_t^\dagger\text{,}y_t^\dagger)]}_{\text{OCO Regret}}+\underbrace{\sum\nolimits_{t=1}^T\theta[\|h_t^\dagger-\hat{h}_t\|^2-\|h_t-\hat{h}_t\|^2]}_{\text{Tracking Penalty Error}}\notag\\
&+\underbrace{\sum\nolimits_{t=1}^T[\tilde{f}_t(x_t^\dagger\text{,}y_t^\dagger)-\tilde{f}_t(x_t^{\star}\text{,}y_t^{\star})]}_{\text{Optimality Gap}}\label{regret_decomposition}
\end{align}    
\end{subequations}
where $\hat{f}_t$ and $\tilde{f}_t$ denote the objective function of OED~\eqref{eq:OED} and convex reformulation~\eqref{eq:reformulation}, respectively. $(x_t^\star,y_t^\star)$ and $(x_t^\dagger,y_t^\dagger)$ denote the optimal solution to OED~\eqref{eq:OED} and TED~\eqref{eq:TED}, respectively. $h_t^\dagger$ denotes the optimal LDES SoC of TED.

\textbf{(1) Convex Hull Approximation Error.} Given a twice differentiable nonconvex power-hydrogen function with second derivative bounded by Z, from the convergence property of the Hausdorff distance between the nonconvex set and its convex hull approximation~\cite{chen2018convex}, we have:
\begin{align}
\text{Regret}^{\text{Approx}}\leq \dfrac{2c^\text{h}Z(\overline{P}-\underline{P})^2T}{(M-1)^2}=\mathcal{O}(\dfrac{T}{M^2}) 
\end{align}

\textbf{(2) OCO Regret.} From $f_{t}$ is convex and~\eqref{condition}, we have:
\begin{align}\label{reg1}
&f_t(\boldsymbol{x}_{i,t}) - f_t(\boldsymbol{x}_t^{\dagger}) \leq \left\langle \partial f_t(\boldsymbol{x}_{i,t}),\ \boldsymbol{x}_{i,t} - \boldsymbol{x}_t^{\dagger} \right\rangle \\
&\leq J \parallel \boldsymbol{x}_{i,t} - \boldsymbol{x}_{i,t+1} \parallel + \left\langle \partial f_t(\boldsymbol{x}_{i,t}),\ \boldsymbol{x}_{i,t+1} - \boldsymbol{x}_t^{\dagger} \right\rangle\notag \\
&\leq \frac{J^{2} \alpha_{i,t}}{2} + \frac{1}{2 \alpha_{i,t}} \parallel \boldsymbol{x}_{i,t} - \boldsymbol{x}_{i,t+1} \parallel^{2} + \left\langle \partial f_t(\boldsymbol{x}_{i,t}),\ \boldsymbol{x}_{i,t+1} - \boldsymbol{x}_t^{\dagger} \right\rangle\notag
\end{align}

For the rightmost term of~\eqref{reg1}, we have:
\begin{align}\label{reg2}
&\left\langle\partial f_t(\boldsymbol{x}_{i,t}),\ \boldsymbol{x}_{i,t+1}-\boldsymbol{x}_t^{\dagger} \right\rangle 
\\&=\left\langle\beta_{i,t+1}(\partial[g_t(\boldsymbol{x}_{i,t+1})]_{+})^{T}Q_{i,t},\ \boldsymbol{x}_t^{\dagger} -\boldsymbol{x}_{i,t+1}\right\rangle \notag
\\&+\left\langle\partial f_t(\boldsymbol{x}_{i,t})+\beta_{i,t+1}(\partial[g_t(\boldsymbol{x}_{i,t+1})]_{+})^{T}Q_{i,t},\ \boldsymbol{x}_{i,t+1}-\boldsymbol{x}_t^{\dagger} \right\rangle\notag
\end{align}

Since $g_{t}$ is a convex function, it is trivial to show that $[g_t]_+$ is also convex; hence the first term of~\eqref{reg2} can be relaxed:
\begin{align}\label{reg3}
&\left\langle\beta_{t+1}(\partial[g_t(\boldsymbol{x}_{i,t+1})]_{+})^{T}Q_{i,t},\ \boldsymbol{x}_t^{\dagger} -\boldsymbol{x}_{i,t+1}\right\rangle\\
&\leq\beta_{t+1}\left\langle Q_{i,t},\ [g_t(\boldsymbol{x}_t^{\dagger} )]_+\right\rangle-\beta_{i,t+1}\left\langle Q_i(t),\ [g_t(\boldsymbol{x}_{i,t+1})]_+\right\rangle\notag
\end{align}

From Lemma 1 in~\cite{yi2020distributed}, we have:
\begin{align}\label{reg4}
&\left\langle\partial f_t(\boldsymbol{x}_{i,t})+\beta_{i,t+1}(\partial[g_t(\boldsymbol{x}_{i,t+1})]_{+})^{T}Q_{i,t},\ \boldsymbol{x}_{i,t+1}-\boldsymbol{x}_t^{\dagger} \right\rangle\\
&\leq\frac1{\alpha_{i,t}}(\parallel \boldsymbol{x}_t^{\dagger} -\boldsymbol{x}_{i,t}\parallel^2-\parallel \boldsymbol{x}_t^{\dagger} -\boldsymbol{x}_{i,t+1}\parallel^2-\parallel \boldsymbol{x}_{i,t+1}-\boldsymbol{x}_{i,t}\parallel^2)\notag
\end{align}

Combining~\eqref{eq:final_decision},~\eqref{reg1}-\eqref{reg4}, we have:
\begin{align}\label{reg5}
&\ell_t(\boldsymbol{x}_{i,t}) - \ell_t(\boldsymbol{x}_t^{\dagger} ) \leq{J^2\alpha_{i,t}}/2+\beta_{t+1}\left\langle Q_i(t),\ [g_t(\boldsymbol{x}_t^{\dagger} )]_+\right\rangle
\\&(\parallel \boldsymbol{x}_t^{\dagger} -\boldsymbol{x}_{i,t}\parallel^2-\parallel \boldsymbol{x}_t^{\dagger} -\boldsymbol{x}_{i,t+1}\parallel^2)/\alpha_{i,t}\notag
\end{align}

Since the second term of~\eqref{reg5} is non-negative, we have:
\begin{align}\label{reg6}
&\sum\nolimits_{t=1}^{T} (\ell_t(\boldsymbol{x}_{i,t}) - \ell_t(\boldsymbol{x}_t^{\dagger} ))\leq\sum\nolimits_{t=1}^T{J^2\alpha_{i,t}}/2\\
&+\sum\nolimits_{t=1}^T(\parallel \boldsymbol{x}_t^{\dagger} -\boldsymbol{x}_{i,t}\parallel^2-\parallel \boldsymbol{x}_t^{\dagger} -\boldsymbol{x}_{i,t+1}\parallel^2)/{\alpha_{i,t}}\notag
\end{align}

For the first term of~\eqref{reg6}, we have:
\begin{equation}\label{reg7}
\begin{aligned}
\sum_{t=1}^T\frac{J^2\alpha_{i,t}}2\leq\frac{2^{i-1}J^2}2\sum_{t=1}^T\frac1{t^{c}}\leq\frac{2^{i-1}J^2}{2(1-c)}T^{1-c}
\end{aligned}
\end{equation}

By leveraging~\eqref{condition} and parameter setting~\eqref{parameter}, we have:
\begin{align}\label{reg8}
&\sum_{t=1}^{T} \frac{t^c}{\alpha_0 2^{i-1}} \left( \|\boldsymbol{x}_t^{\dagger}  - \boldsymbol{x}_{i,t}\|^2 - \|\boldsymbol{x}_t^{\dagger}  - \boldsymbol{x}_{i,t+1}\|^2 \right) \\
&= \frac{1}{\alpha_0 2^{i-1}}\sum_{t=1}^{T} \big( t^c \|\boldsymbol{x}_t^{\dagger}  - \boldsymbol{x}_{i,t}\|^2 - (t + 1)^c \|\boldsymbol{x}_{t+1}^{\dagger} - \boldsymbol{x}_{i,t+1}\|^2 \notag\\
&+  (t + 1)^c \|\boldsymbol{x}_{t+1}^{\dagger} - \boldsymbol{x}_{i,t+1}\|^2 - t^c \|\boldsymbol{x}_t^{\dagger}  - \boldsymbol{x}_{i,t+1}\|^2\notag  \\
&+ t^c \|\boldsymbol{x}_t^{\dagger}  - \boldsymbol{x}_{i,t+1}\|^2 - t^c \|\boldsymbol{x}_t^{\dagger}  - \boldsymbol{x}_{i,t}\|^2 \big)\notag\\
&\le \frac{1}{\alpha_0 2^{i-1}} \|\boldsymbol{x}_{1}^{\dagger} - \boldsymbol{x}_{i,1}\|^2 + \frac{1}{\alpha_0 2^{i-1}} \sum_{t=1}^{T} \left( (t + 1)^c - t^c \right) (d(\mathcal{X}))^2 \notag\\
&+ \frac{2}{\alpha_0 2^{i-1}} \sum_{t=1}^{T} t^c d(\mathcal{X}) \|\boldsymbol{x}_{t+1}^{\dagger} - \boldsymbol{x}_t^{\dagger} \|\notag \\
&\le \frac{1}{\alpha_0 2^{i-1}} \left( 1 + (T + 1)^c - 1 \right) (d(\mathcal{X}))^2 + \frac{2T^c d(\mathcal{X}) P_x}{\alpha_0 2^{i-1}}\notag \\
&\le \frac{2}{\alpha_0 2^{i-1}} (d(\mathcal{X}))^2 T^c \left( 1 + {P_x}/{d(\mathcal{X})} \right)\notag
\end{align}
where $d(\mathcal{X})$ denotes the diameter of the feasible set.

Let $i_0=\left[\frac12\log_2(1+{P_x}/{d(\mathcal{X})})\right]+1\in[N]$, we have:
\begin{equation}\label{reg9}
2^{i_0-1}\leq\sqrt{1+{P_x}/{d(\mathcal{X})}}\leq2^{i_0}
\end{equation}

Combining~\eqref{reg7}-\eqref{reg9} yields:
\begin{align}\label{reg10}
&\sum_{t=1}^{T} (\ell_t(\boldsymbol{x}_{i_0, t}) - \ell_t(\boldsymbol{x}_t^{\dagger} )) \leq \frac{4}{\alpha_0} (d(\mathcal{X}))^2 T^c \left( 1 + \frac{P_x}{d(\mathcal{X})} \right)^{1-\kappa}\notag\\
&+ \frac{J^2 \alpha_0}{2 (1 - c)} T^{1-c} \left( 1 + \frac{P_x}{d(\mathcal{X})} \right)^{\kappa}
\end{align}
Applying Lemma 1 in reference~\cite{zhang2018adaptive} to~\eqref{eq:final_decision} yields:
\begin{equation}\label{reg11}
\begin{aligned}
\sum_{t=1}^T\ell_t(\boldsymbol{x}_{t})-\min_{i\in[N]}\{\sum_{t=1}^T\ell_t(\boldsymbol{x}_{i,t})+\frac1\gamma\ln\frac1{\rho_{i,1}}\}\leq\frac{\gamma(Jd(\mathcal{X}))^2T}2
\end{aligned}
\end{equation}
\begin{equation}\label{reg12}
\begin{aligned}
\sum_{t=1}^T(\ell_t(\boldsymbol{x}_{t})-\ell_t(\boldsymbol{x}_{i_0,t}))\leq\frac{\gamma_0 (J d(\mathcal{X}))^2 T^{1 - c}}{2} + \frac{1}{\gamma_0} T^c \ln \frac{1}{\rho_{i_0, 1}}
\end{aligned}
\end{equation}
From $\rho_{i,1}=(N+1)/[i(i+1)N]$, we have:
\begin{equation}\label{reg13}
\begin{aligned}
\ln\frac1{\rho_{i_0,1}}\leq\ln(i_0(i_0+1))\leq2\ln(i_0+1)\leq2\ln(\left\lfloor\kappa\log_2(1+\frac{P_x}{d(\mathcal{X})})\right\rfloor)
\end{aligned}
\end{equation}
From~\eqref{eq:final_decision} and that $f_{t}$ is convex, we have
\begin{equation}\label{reg14}
\begin{aligned}
f_t(\boldsymbol{x}_{t})-f_t(\boldsymbol{x}_t^{\dagger} )\leq \ell_t(\boldsymbol{x}_{t})-\ell_t(\boldsymbol{x}_t^{\dagger} )
\end{aligned}
\end{equation}
Combining~\eqref{reg10}-\eqref{reg14} yields:
\begin{equation}\label{reg15}
\begin{aligned}
&\text{Regret}\leq \frac{4}{\alpha_0} (d(\mathcal{X}))^2 T^c \left( 1 + \frac{P_x}{d(\mathcal{X})} \right)^{1 - \kappa}+\frac{\gamma_0 (J d(\mathcal{X}))^2 T^{1 - c}}{2}\\
 &+\frac{J^2 \alpha_0}{2(1 - c)} T^{1 - c} \left( 1 + \frac{P_x}{d(\mathcal{X})} \right)^{\kappa}+\frac{2}{\gamma_0} T^c \ln ([\kappa \log_2 \left( 1 + \frac{P_x}{d(\mathcal{X})} \right)])\\
\end{aligned}
\end{equation}

Hence, we have the regret bound of OCO in~\eqref{OCObound}.
\begin{align}\label{OCObound}
\text{Regret}^{\text{OCO}}=\mathcal{O}(T^c (1 + P_x)^{1 - \kappa} + T^{1 - c} (1 + P_x)^{\kappa})    
\end{align}

\textbf{(3) Tracking Penalty Error.} From the KKT condition of TED, we have~\eqref{KKT}, where $\varphi^\dagger$ denotes the Lagrange multiplier of TED. Since the second term of tracking penalty error is non-positive, we have~\eqref{penalty_error}. 
\begin{subequations}
\begin{align}
&\nabla_{h_t}\tilde{f}_t(x_t\text{,}y_t)+2\theta(h_t^\dagger-\hat{h}_t)+\varphi^\dagger=0\label{KKT}\\
&\text{Regret}^{\text{Penalty}}\leq\sum\nolimits_{t=1}^T\theta\|h_t^\dagger-\hat{h}_t\|^2=\mathcal{O}(\theta\dfrac{T}{{\theta^2}})=\mathcal{O}(\dfrac{T}{{\theta}})\label{penalty_error}
\end{align}    
\end{subequations}

\textbf{(4) Optimality Gap.} Due to the Lipschitz continuity of $\tilde{f}_t$, we have~\eqref{L_f}. From the power balance constraints~\eqref{powerbalance-1} and SoC dynamics~\eqref{E-power-H-1}, the relationship between \(h_t\) and \((x_t\text{,} y_t)\) is affine. Thus, we have~\eqref{L_H}. Then, by applying the triangle inequality, we have~\eqref{triangle_inequality}.
\begin{subequations}
\begin{align}
&\tilde{f}_t(x_t^\dagger\text{,}y_t^\dagger)-\tilde{f}_t(x_t^{\star}\text{,}y_t^{\star})\leq\Lambda_f\|(x_t^\dagger\text{,}y_t^\dagger)-(x_t^{\star}\text{,}y_t^{\star})\|\label{L_f}\\
& \|(x_t^\dagger,y_t^\dagger)-(x_t^\star,y_t^\star)\| \leq \Lambda_h \|h_t^\dagger - {h}_t^\star\|\label{L_H}\\
&\|h_t^\dagger - {h}_t^\star\|\leq\|h_t^\dagger - \hat{h}_t\|+\|\hat{h}_t - \tilde{h}_t\|+\|\tilde{h}_t - {h}_t^\star\| \label{triangle_inequality}
\end{align}
\end{subequations}
where $\Lambda_f$ and $\Lambda_h$ denote the Lipschitz constants of $\tilde{f}_t$ and the affine mapping from decision variables to LDES SoC.

From~\eqref{eq:mse_kernel}, we have the best bandwidth: $\sigma^\star=\mathcal{O}({S}^{-1/(4+\iota)})$. By substituting it into~\eqref{eq:mse_kernel}, we can obtain the optimal MSE in~\eqref{optimal_MSE}. Thus, we have the tracking optimality gap in~\eqref{optimality_gap}.
\begin{subequations}
\begin{align}
& \text{MSE}^\star=\mathbb{E}\|h_t^\dagger-\hat{h}_t\|^2=\mathcal{O}({S}^{-4/(4+\iota)})\label{optimal_MSE} \\
&\text{Regret}^{\text{Opt}}=\mathcal{O}({T}/{{S}^{2/(4+\iota)}})+\mathcal{O}({T}/{\theta})+\mathcal{O}({T}/{M^2}) \label{optimality_gap}
\end{align}    
\end{subequations}

By substituting the parameter settings in~\eqref{parameter} into the overall regret expression given by~\eqref{overall_regret}, we have finished the proof.
\begin{align}\label{overall_regret}
\text{Regret}&=\mathcal{O}(T^c (1 + P_x)^{1 - \kappa} + T^{1 - c} (1 + P_x)^{\kappa})\\&+\mathcal{O}({T}/{{S}^{2/(4+\iota)}})+\mathcal{O}({T}/{\theta})+\mathcal{O}({T}/{M^2})  \notag 
\end{align}

\end{document}